\documentclass[12pt]{amsart}

\usepackage[english]{babel}
\usepackage[utf8]{inputenc}

\usepackage{tabu}
\usepackage{amsmath}
\usepackage{amssymb}
\usepackage{amsthm}
\usepackage{graphicx}
\usepackage{caption}
\usepackage{subcaption}
\newtheorem{theorem}{Theorem}[section]
\newtheorem{lemma}[theorem]{Lemma}

\theoremstyle{remark}

\newtheorem{question}[theorem]{Question}

\usepackage[colorinlistoftodos]{todonotes}

\title{Some experiments on Bateman-Horn}

\author{Igor Rivin}
\address{University of St Andrews School of Mathematics and Statistics}
\email{igor.rivin@st-andrews.ac.uk}
\keywords{primes, polynomials, Bateman-Horn}
\subjclass{11P32, 11N37}
\thanks{The author would like to thank the MathOverflow community for its support, and in particular, the user \textbf{joro} who asked the question which led to this note. The author would like to thank Keith Conrad for valuable suggestions.}
\date{\today}

\begin{document}

\begin{abstract}
We describe some studies related to the frequency of prime values of integer polynomials.
\end{abstract}

\maketitle

\section{Introduction}

The Bateman-Horn conjecture  (see 
\cite{bateman1962heuristic}) states that given $m$ irreducible polynomials $f_1, \dotsc, f_m$ with integer coefficients, then the number of positive integers $k \leq x$ such that $f_i(k)$ is prime\footnote{We allow our primes to be negative.} for every $1\leq i\leq m$ is asymptotic to 
\[\frac{C}{D} \int_2^x \frac{dt}{\log^m t},\] where $D$ is the product of the degrees of the $f_i,$ while 
\[
C(f) = \prod_p \frac{1-n_p(f)/p}{1-1/p},
\]
where $n_p(f)$ is the number of solutions of $f(x) = 0 \mod p,$ where $f(x) = f_1 \cdots f_m.$

In this note, we look at the simplest case where $m=1,$ and study the behavior of the coefficient $C(f),$ where $f$ is a random monic polynomial\footnote{Such a polynomial is almost surely irreducible, though in the experiments we throw away the few exceptional cases.}  in $\mathbb{Z}[x].$ Our model of a random polynomial is one where all the coefficients (except the leading one, which is always equal to $1$) are uniformly chosen from the interval $[-N, N],$ where $N$ is typically equal to $1000.$ Further, we approximate the constant $C(f)$ by taking the products over the first several primes (first $3000$ primes in the experiment below). 
Now, there is also a finite probability that $C(f) = 0$ - this would be true if the values of $f$ are \emph{always} divisible by $p,$ for some prime $p$ - obviously the most likely such prime is $2,$ so it makes sense to look at those $f$ which don't always vanish mod $p$ for any $p$ (or, at least, any $p$ we look at).\footnote{This is the so-called Bunyakovsky condition.}

Now, what do we look at?
\begin{itemize}
\item What is the mean value of $C(f)$ over our sample space? (whether all irreducible $f$ or those $f$ satisfying the Bunyakovsky condition?
\item How are the values of $C(f)$ distributed (here, it only makes sense to look at those $f$ satisfying the Bunyakovsky condition).
\item What are the extremal values of $C,$ and what property of the polynomials involved is responsible? (we have not studied this yet).
\end{itemize}

It should be noted that in the ``orthogonal'' direction (where we have a number of \emph{linear} polynomials, which corresponds to the prime $k$-tuple conjecture), some statistical results were obtained by P.~X.~Gallagher \cite{gallagher} and E. Kowalski \cite{dummkopf}.
\section{Some experimental observations}
\label{sec:observations}

I looked at polynomials of degrees $2$ through $6.$ Here are the means:

\begin{center}
  \begin{tabular}{ l || c | r | rr }
    \hline
    degree & mean & Bunyakovsky mean & Bunyakovsky log mean \\ \hline
    2 & 1.00329 & 1.33525 & 0.0847562\\ \hline
    3 & 1.00714 & 1.38343 & 0.129013\\ \hline
    4 & 0.971798 & 1.35929 & 0.124901 \\ \hline
    5 & 1.00044 & 1.38399 &0.132433 \\ \hline
    6 & 1.02139 & 1.40398 & 0.155772 \\ \hline
    7 & 1.02448 & 1.41654 & 0.166858 \\ \hline
    8 & 1.00884 & 1.36817 & 0.120447 \\ \hline
  \end{tabular}
\end{center}
It does not seem surprising that the mean of all the irreducible polynomials is very close to $1.$ However, I am unaware of any proof of this. Here is a somewhat relevant fact:
\begin{lemma}
For each $f \in \mathbb{F}_p[x]$ let $n(f)$ be the number of zeros of $f$ counted \emph{without} multiplicity. Then, the mean value of $n(f)$ over all of $\mathbb{F}_p[x]$ is $1.$
\end{lemma}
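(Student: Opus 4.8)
\section*{Proof proposal}

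The plan is to read ``mean value over all of $\mathbb{F}_p[x]$'' as the average over the (finite) set of polynomials of a fixed degree $d\ge 1$; the argument will show this average equals $1$ for every $d$, so the same conclusion holds for the average over polynomials of degree $\le d$, or in any reasonable limiting sense as the degree grows. The one structural observation to exploit is that, because zeros are counted \emph{without} multiplicity, $n(f)$ is literally a sum of indicator functions: for $a\in\mathbb{F}_p$ put $X_a(f)=1$ if $f(a)=0$ and $X_a(f)=0$ otherwise, so that $n(f)=\sum_{a\in\mathbb{F}_p}X_a(f)$. By linearity of expectation it then suffices to compute $\mathbb{E}[X_a]=\Pr[f(a)=0]$ for each fixed $a$ and to sum.

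The key step is the claim that $\Pr_f[f(a)=0]=1/p$ for every $a\in\mathbb{F}_p$, where $f$ is drawn uniformly among polynomials of degree $d$. To see this, write $f(x)=c_dx^d+\dots+c_1x+c_0$ and condition on the values of $c_1,\dots,c_d$. Then $f(a)=c_0+(c_1a+\dots+c_da^d)$ is an affine function of $c_0$ alone, and since $c_0$ is uniform on $\mathbb{F}_p$ and independent of the remaining coefficients, $f(a)$ is uniform on $\mathbb{F}_p$; in particular $f(a)=0$ with probability exactly $1/p$. (The case $a=0$ is covered by the same computation, where $f(0)=c_0$ directly.) Summing over the $p$ elements of $\mathbb{F}_p$ gives $\mathbb{E}[n(f)]=p\cdot\frac1p=1$.

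Equivalently, one can phrase this as a pure counting argument: for fixed $a$, a polynomial $f$ satisfies $f(a)=0$ iff $f=(x-a)g$, and then $\deg f=d$ iff $\deg g=d-1$; there are $(p-1)p^{d-1}$ such $g$, so there are $(p-1)p^{d-1}$ polynomials of degree $d$ vanishing at $a$. Multiplying by the $p$ choices of $a$ counts $(p-1)p^{d}$ pairs $(f,a)$ with $f(a)=0$, which is exactly the number of degree-$d$ polynomials; dividing, the average of $n(f)$ is $1$. I do not anticipate a genuine obstacle: the only points needing care are (i) making precise what ``average over all of $\mathbb{F}_p[x]$'' means, the cleanest resolution being to fix the degree and note $d$-independence, and (ii) fixing a convention for the (single, hence measure-zero) zero polynomial if one insists on averaging over degree $\le d$ — with the natural convention $n(0)=p$ the linearity computation above is unchanged.
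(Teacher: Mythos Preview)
Your proof is correct and is essentially the same argument as the paper's, just in probabilistic rather than geometric dress: both count incidences $\{(a,f):f(a)=0\}$ and use that for each choice of $a$ and of the non-constant coefficients there is a unique constant term making $f(a)=0$. The paper packages this as the observation that the incidence variety $X^n+\sum_{i=0}^{n-1}Y_iX^i=0$ is a copy of $\mathbb{A}^n$ (solve for $Y_0$), which is precisely your ``$f(a)$ is uniform because $c_0$ is'' step.
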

\label{horizontalmean}
\begin{proof}
Consider the variety $V$ defined over $\mathbb{F}_p$ by 
\[
X^n + \sum_{i=0}^{n-1} Y_i X^i=0.
\]
The statement of the Lemma is equivalent to asserting that there are $p^n$ $\mathbb{F}_p$-points on $V.$ However, if we rewrite the equation of $V$ as
\[
X^n + \sum_{i=1}^{n-1} Y_i X^i= - Y_0,
\]
it becomes obvious that $V$ is nothing but the affine space $\mathbb{A}^n,$ so the result follows.
\end{proof}
Now, in our experiment we sample uniformly from polynomials in $\mathbb{Z}[x]$ with coefficients bounded above by $C$ and below by $-C,$ then reduce modulo $p.$ When $C$ is much bigger than $P$ our sample space will contain all of $\mathbb{F}_p,$ but with somewhat uneven multiplicity. When $C$ is much smaller than $p,$ our sample space contains only a small sliver of $
\mathbb{F}_p,$ so we are asserting that the $\mathbb{F}_p$ points of $V$ are very well equidistributed. If we average over primes, such a result is true.
\begin{lemma}
\label{chebot}
Let $f\in \mathbb{Z}[x]$ be irreducible over $\mathbb{Q}.$ Then, 
\[
\lim_{x\rightarrow \infty}\frac{\sum_{p\leq x}n_p(f)}{\pi(x)} = 1,
\]
where $n_p(f)$ is the number of zeros of $f$ modulo $p.$
\end{lemma}
\begin{proof}
For each prime, let $s_f = \{d_1, \dotsc, d_k\}$ be the set of degrees of irreducible factors of $f$ modulo $p.$ Chebotarev's theorem (see, e.g., \cite{stevenhagen1996chebotarev}) says that the fraction of the primes for which a  given $s_f$ occurs is the same (asymptotically) as the fraction of the elements of the Galois group of $f$ which have the cycle decomposition with lengths given by $s_f.$ If $f$ is irreducible, the Galois group of $f$ is a transitive subgroup of $S_n.$ The result now follows from Lemma 
\ref{transburn}
\end{proof}
\begin{lemma}
\label{transburn}
The average number of the fixed points of elements of a transitive permutation group equals $1.$
\end{lemma}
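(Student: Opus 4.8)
The plan is to deduce this from the Cauchy--Frobenius orbit-counting lemma (often attributed to Burnside), which states that for a finite group $G$ acting on a finite set $\Omega$, the number of orbits equals
\[
\frac{1}{|G|}\sum_{g\in G} |\operatorname{Fix}_\Omega(g)|,
\]
where $\operatorname{Fix}_\Omega(g)$ is the set of points of $\Omega$ fixed by $g$. Granting this, a transitive action has exactly one orbit, so the right-hand sum equals $|G|$, and therefore the average $\frac{1}{|G|}\sum_{g\in G}|\operatorname{Fix}_\Omega(g)|$ is precisely $1$, which is the assertion. So the bulk of the work is just recalling (or including) a proof of the orbit-counting lemma.

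If I want the note to be self-contained, I would prove the needed instance directly by double counting the set $S = \{(g,x)\in G\times\Omega : g\cdot x = x\}$. Summing over $g\in G$ first gives $|S| = \sum_{g\in G}|\operatorname{Fix}_\Omega(g)|$. Summing over $x\in\Omega$ first gives $|S| = \sum_{x\in\Omega}|\operatorname{Stab}_G(x)|$. The orbit--stabilizer theorem gives $|\operatorname{Stab}_G(x)| = |G|/|Gx|$, and transitivity forces $|Gx| = |\Omega|$ for every $x$, so $\sum_{x\in\Omega}|\operatorname{Stab}_G(x)| = |\Omega|\cdot |G|/|\Omega| = |G|$. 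Equating the two expressions for $|S|$ and dividing by $|G|$ yields the claim.

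To connect this back to Lemma~\ref{chebot}: apply the above with $G$ the Galois group of $f$ viewed as a transitive subgroup of $S_n$ acting on the $n$ roots, and observe that an element $\sigma\in G$ fixes exactly as many roots as the number of length-$1$ cycles in its cycle decomposition, i.e.\ its number of fixed points in the permutation sense; hence the average count of degree-$1$ factors predicted by Chebotarev is this same average, namely $1$.

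I do not expect a genuine obstacle here: the only "content" is the orbit-counting identity, and the single hypothesis of transitivity is exactly what collapses the orbit count to $1$. The one place to be slightly careful is to state the action being used (on the set of roots, equivalently on $\{1,\dots,n\}$) and to make sure "fixed points of a permutation" is read as fixed points of that action, so that the bridge to the number of linear factors in Lemma~\ref{chebot} is unambiguous.
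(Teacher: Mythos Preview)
Your proof is correct and follows the same approach as the paper: both invoke the Cauchy--Frobenius/Burnside orbit-counting lemma and use transitivity to conclude there is a single orbit, giving an average fixed-point count of $1$. Your version simply adds a self-contained double-counting proof of the orbit-counting identity and spells out the link to Lemma~\ref{chebot}, but the core argument is identical.
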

\begin{proof}
This follows from Burnside's lemma: for a group $G$ acting on a set $X,$ we have 
\[
|X/G| = \frac1{|G|}\sum_{g\in G}|X^g|,
\]
where $|X/G|$ is the number of orbits of the $G$ action, and $|X^g|$ is the number of fixed points of $g.$ Since $G$ acts transitively, the left hand side equals $1.$
\end{proof}
\subsection{The distribution}
As you may or may not be convinced by the charts below, the values of $C(f)$ for Bunyakovsky $f$ (that is, an $f$ which does not vanish identically modulo any prime $p$) seem to be log-normally distributed\footnote{The graphs are of the Bateman-Horn quantity $C/D,$ not of $C.$}. If true, this seems to indicate that the terms $1-n_p/p$ are independent; the distributions for different $p$ are not identical, but are the ones given by Chebotarev density. Now, independence for (very) small primes follows from the Chinese Remainder Theorem, but when the primes are large compared to the coefficients of our polynomials, that is far from clear.

\begin{figure}[!tbp]
\begin{subfigure}[b]{0.43\textwidth}
\includegraphics[width=\textwidth]{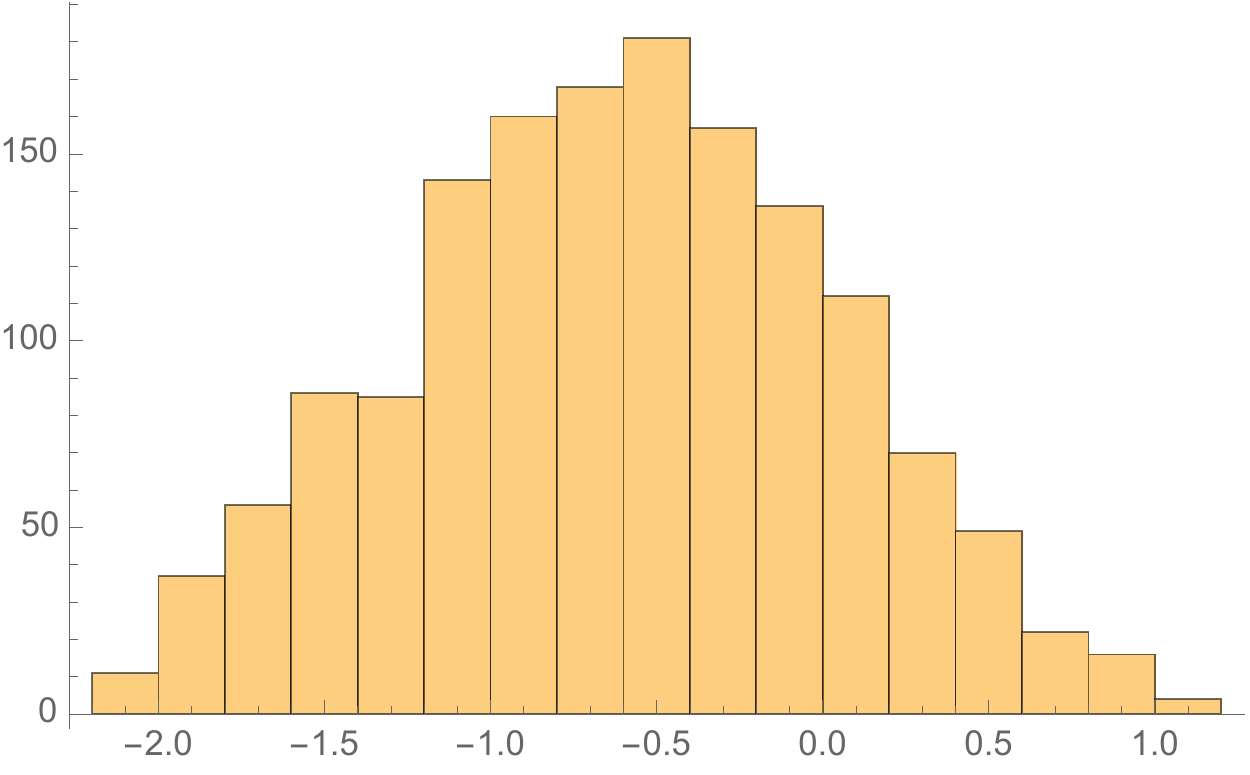}
\caption{\label{fig:histo2}Histogram of log $C(f)$}
\end{subfigure}
\hfill
\begin{subfigure}[b]{0.43\textwidth}
\includegraphics[width=\textwidth]{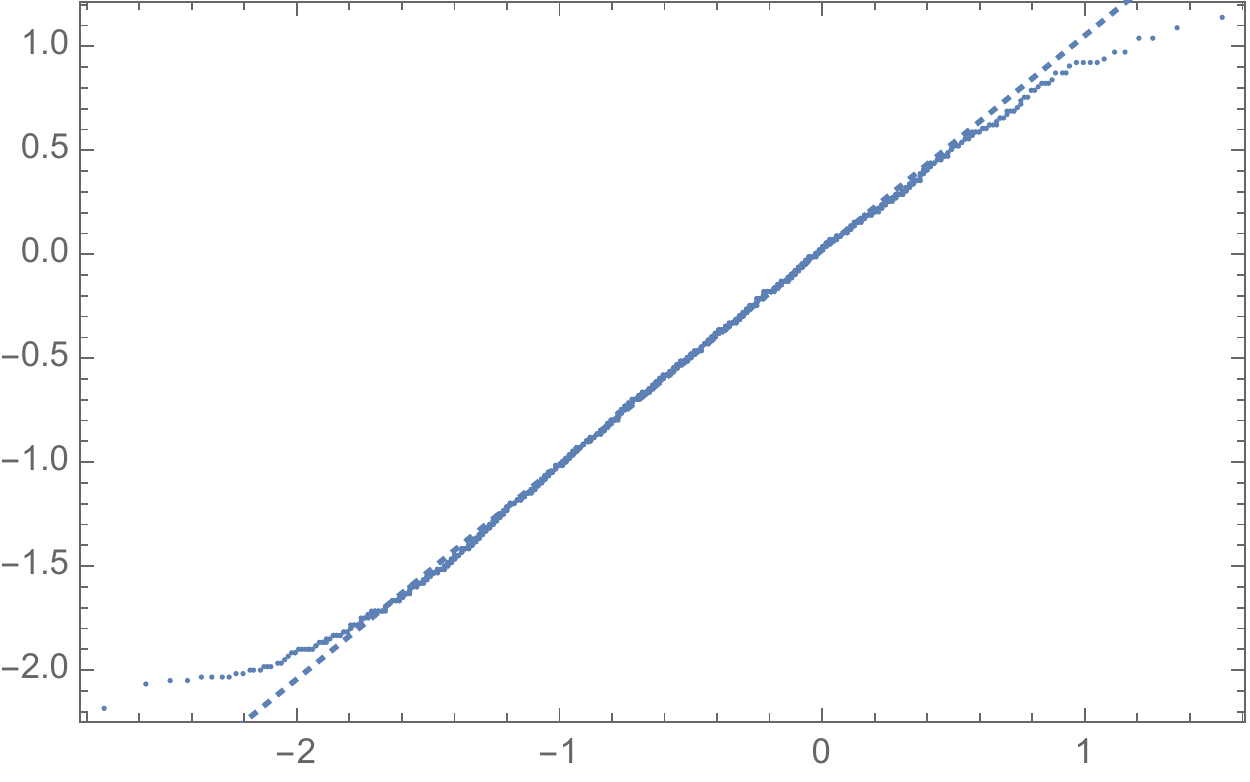}
\caption{\label{fig:qq2}quantile-quantile plot of log $C(f)$ }
\end{subfigure}
\caption{Degree 2}
\end{figure}

\begin{figure}[!tbp]
\begin{subfigure}[b]{0.43\textwidth}
\includegraphics[width=\textwidth]{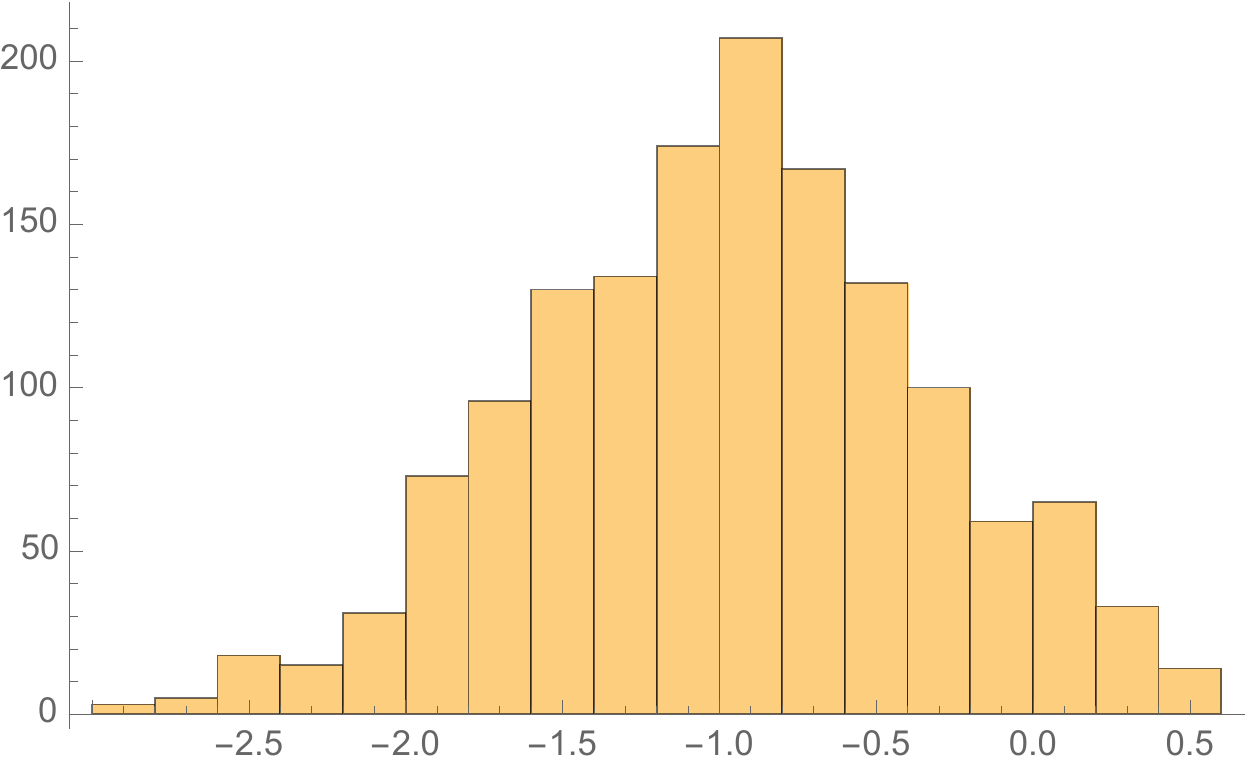}
\caption{\label{fig:histo3}Histogram of log $C(f)$}
\end{subfigure}
\hfill
\begin{subfigure}[b]{0.43\textwidth}
\includegraphics[width=\textwidth]{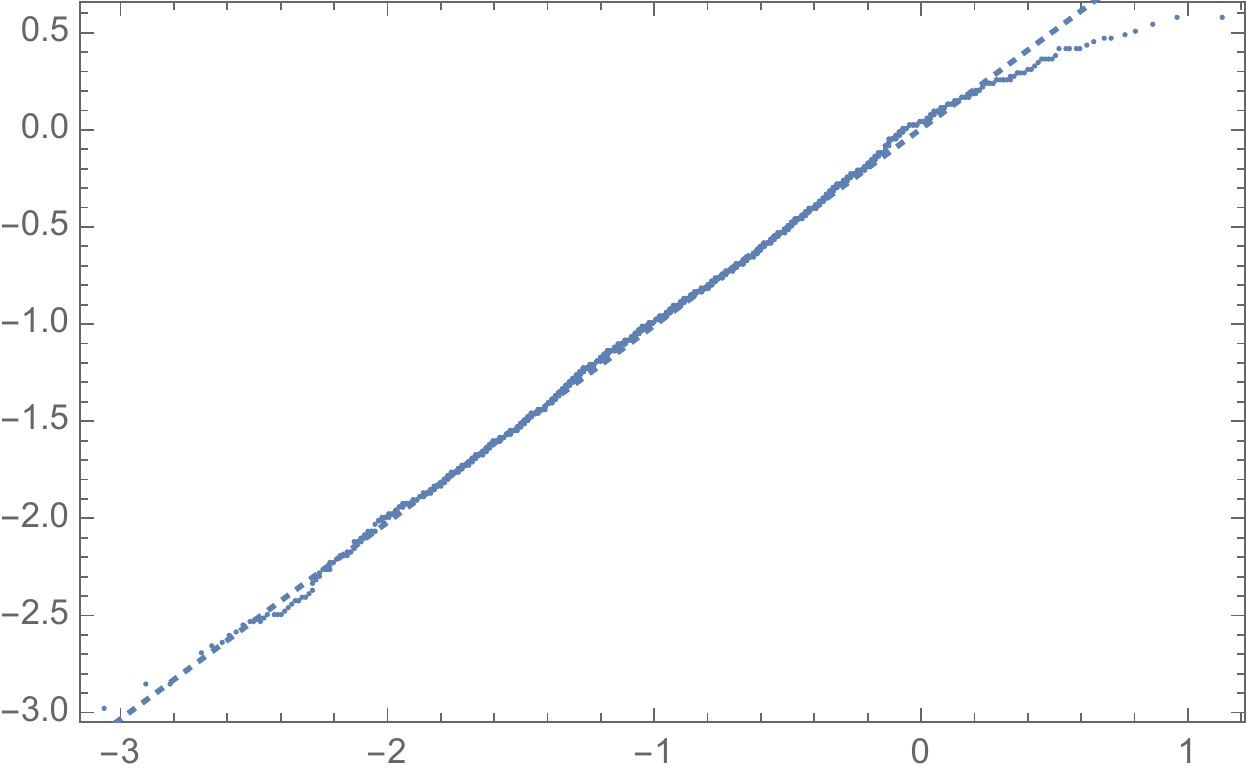}
\caption{\label{fig:qq3}quantile-quantile plot of log $C(f)$ }
\end{subfigure}
\caption{Degree 3}
\end{figure}

\begin{figure}[!tbp]
\begin{subfigure}[b]{0.43\textwidth}
\includegraphics[width=\textwidth]{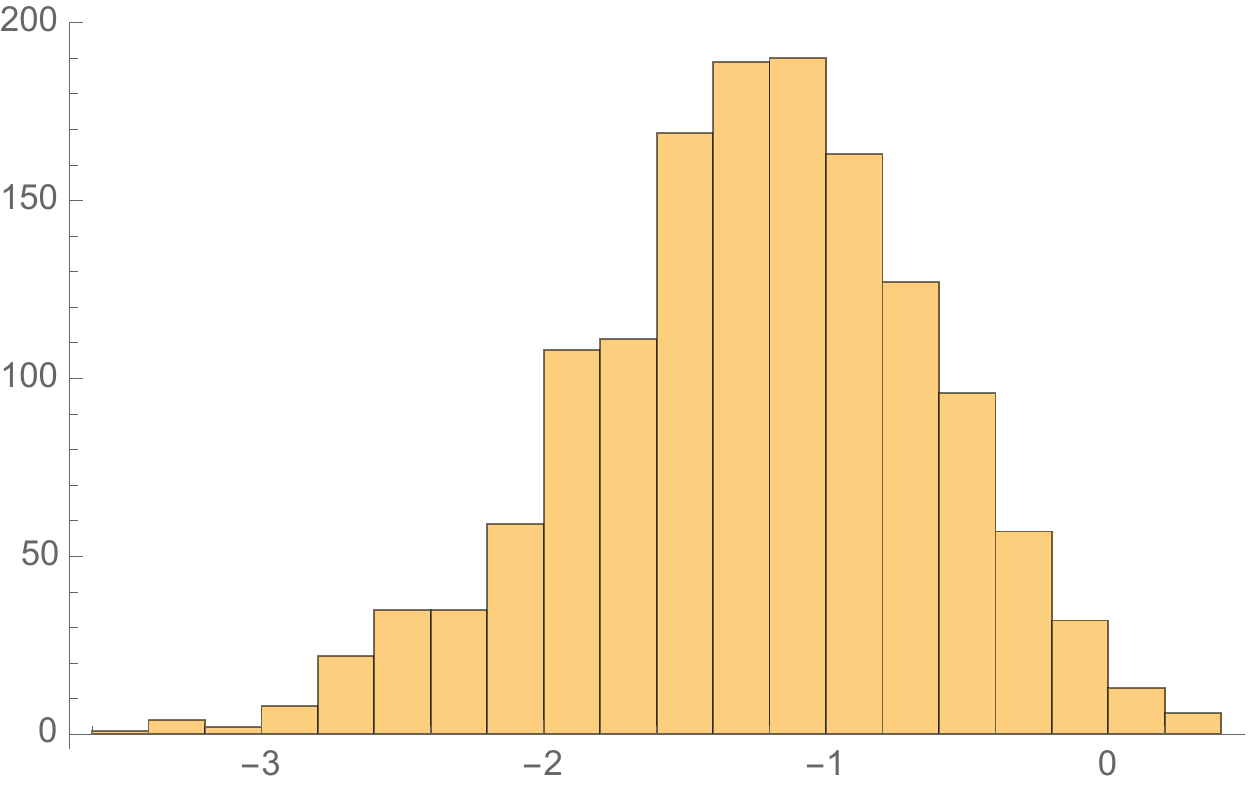}
\caption{\label{fig:histo4}Histogram of log $C(f)$}
\end{subfigure}
\hfill
\begin{subfigure}[b]{0.43\textwidth}
\includegraphics[width=\textwidth]{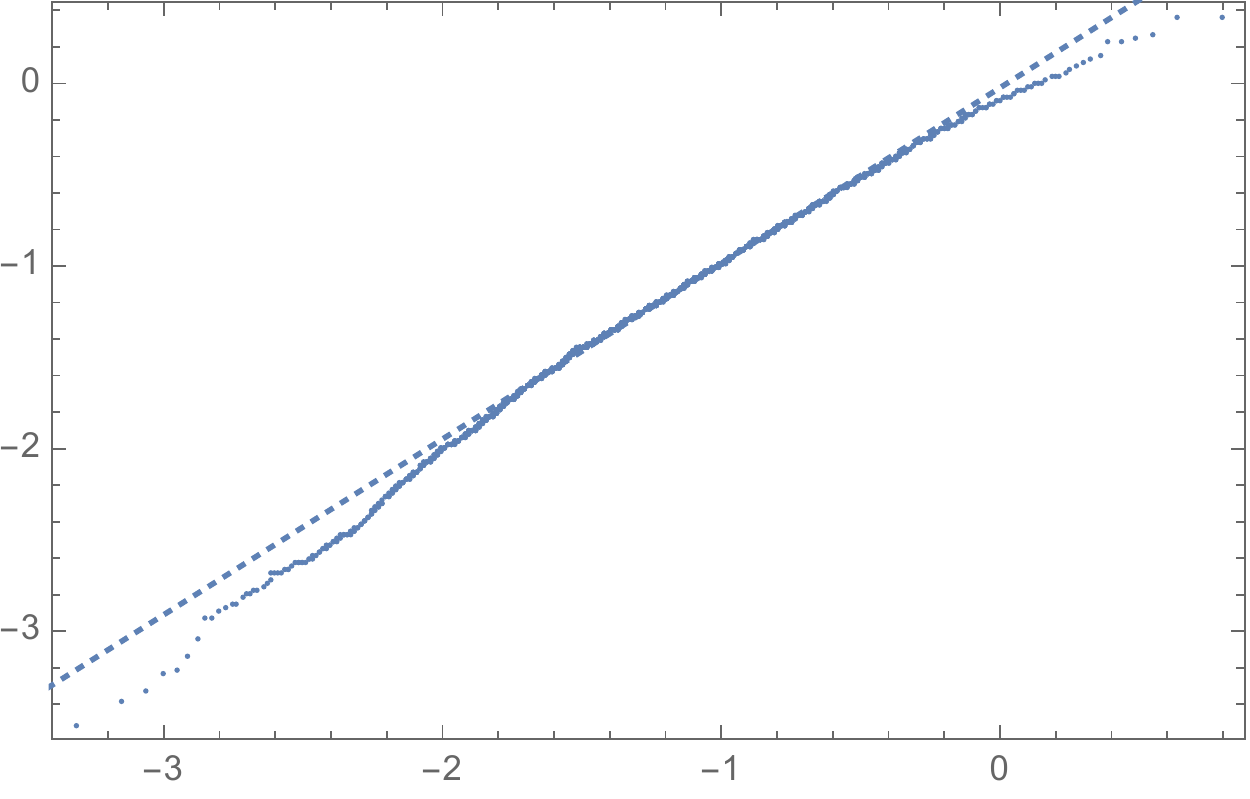}
\caption{\label{fig:qq4}quantile-quantile plot of log $C(f)$ }
\end{subfigure}
\caption{Degree 4}
\end{figure}

\begin{figure}[!tbp]
\begin{subfigure}[b]{0.43\textwidth}
\includegraphics[width=\textwidth]{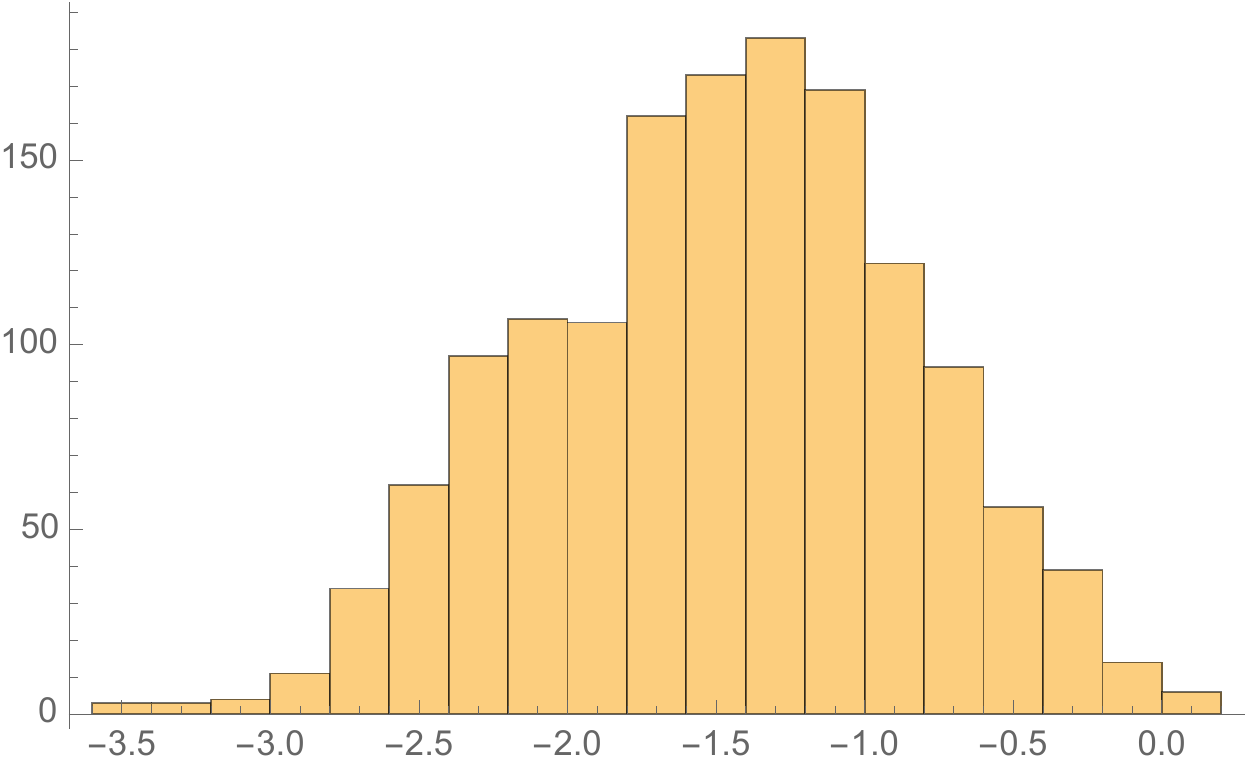}
\caption{\label{fig:histo5}Histogram of log $C(f)$}
\end{subfigure}
\hfill
\begin{subfigure}[b]{0.43\textwidth}
\includegraphics[width=\textwidth]{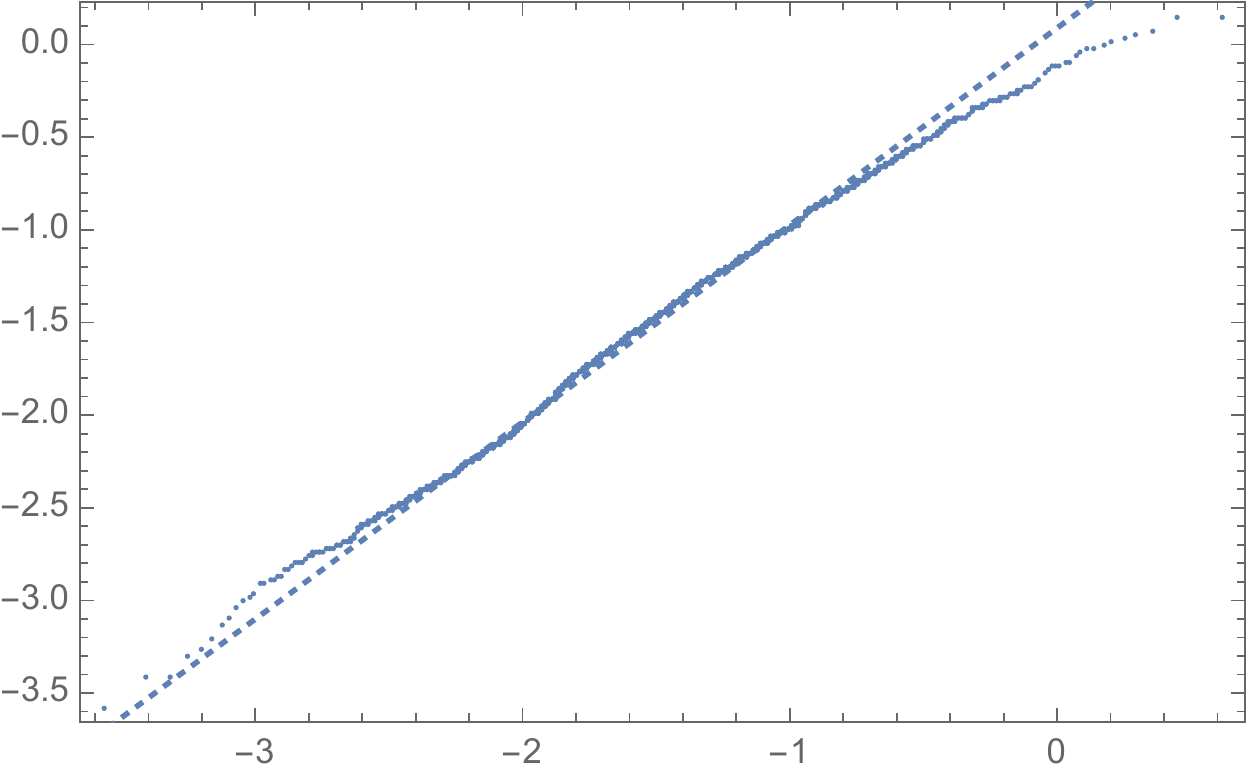}
\caption{\label{fig:qq5}quantile-quantile plot of log $C(f)$ }
\end{subfigure}
\caption{Degree 5}
\end{figure}

\begin{figure}[!tbp]
\begin{subfigure}[b]{0.43\textwidth}
\includegraphics[width=\textwidth]{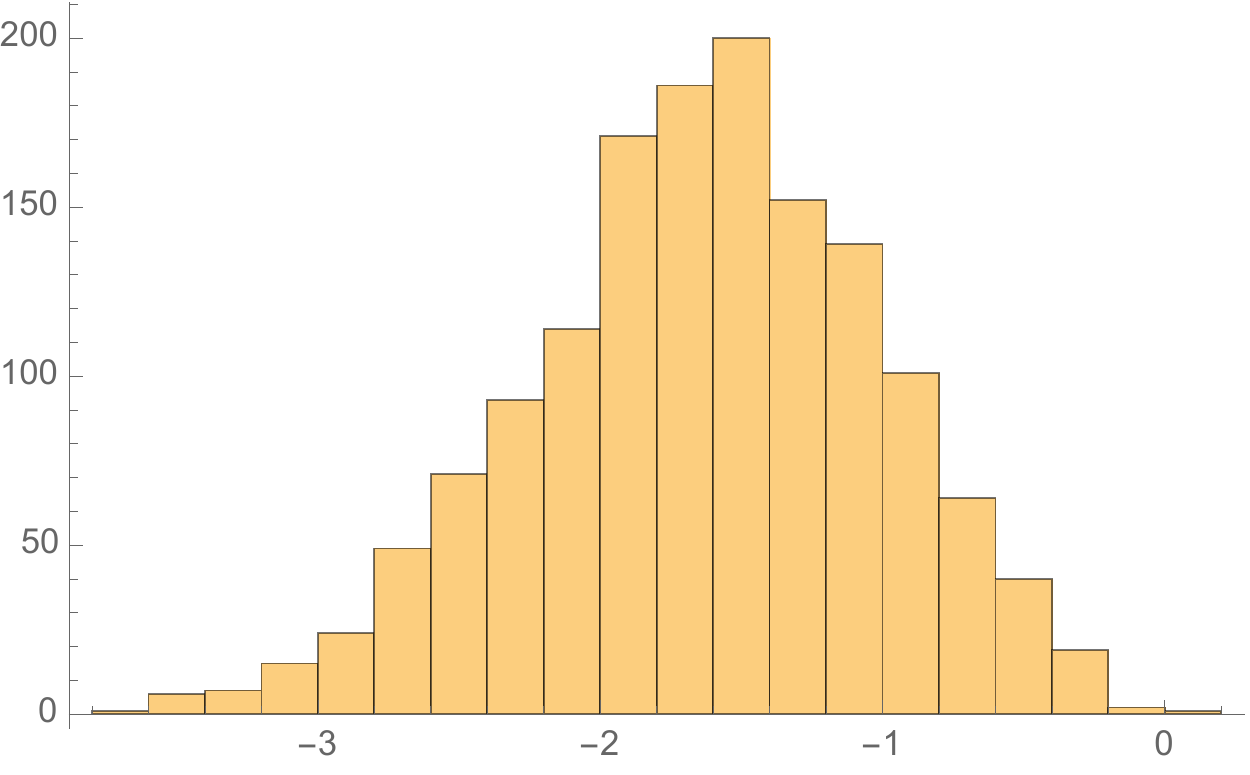}
\caption{\label{fig:histo6}Histogram of log $C(f)$}
\end{subfigure}
\hfill
\begin{subfigure}[b]{0.43\textwidth}
\includegraphics[width=\textwidth]{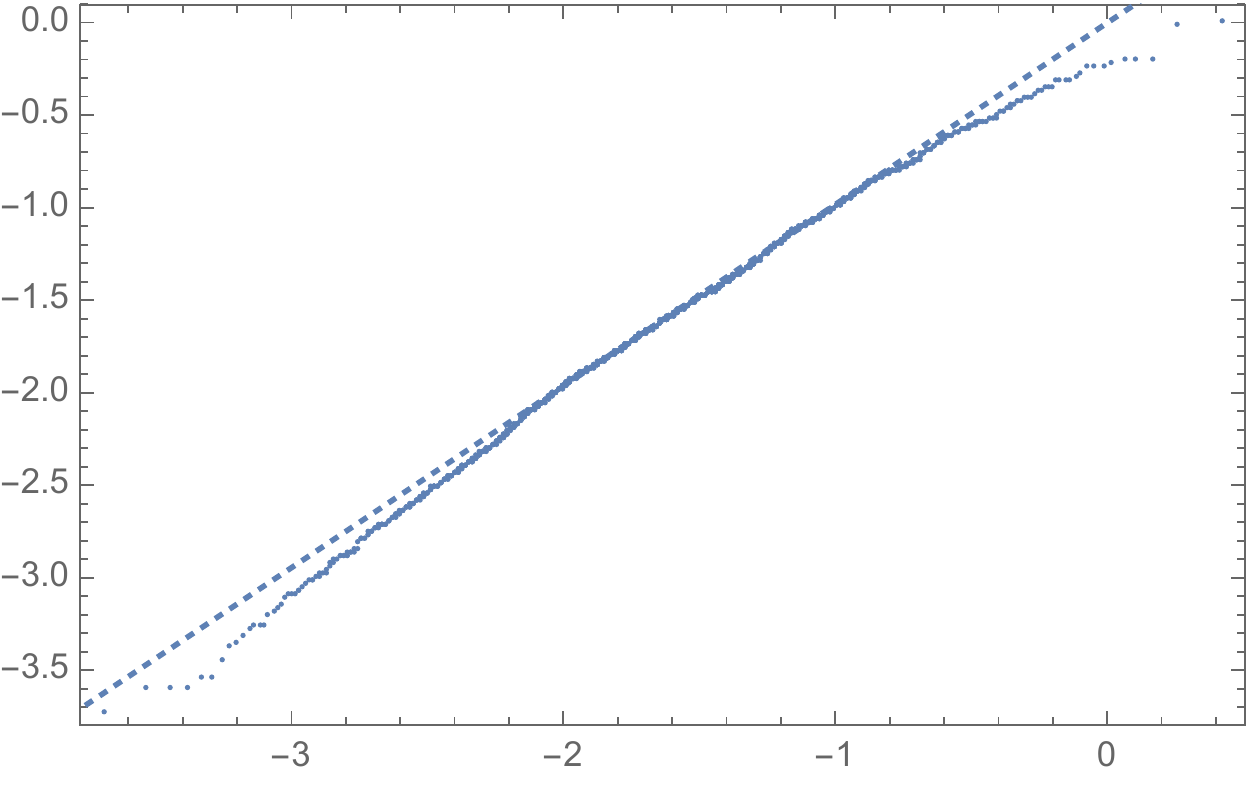}
\caption{\label{fig:qq6}quantile-quantile plot of log $C(f)$ }
\end{subfigure}
\caption{Degree 6}
\end{figure}


\begin{figure}[!tbp]
\begin{subfigure}[b]{0.43\textwidth}
\includegraphics[width=\textwidth]{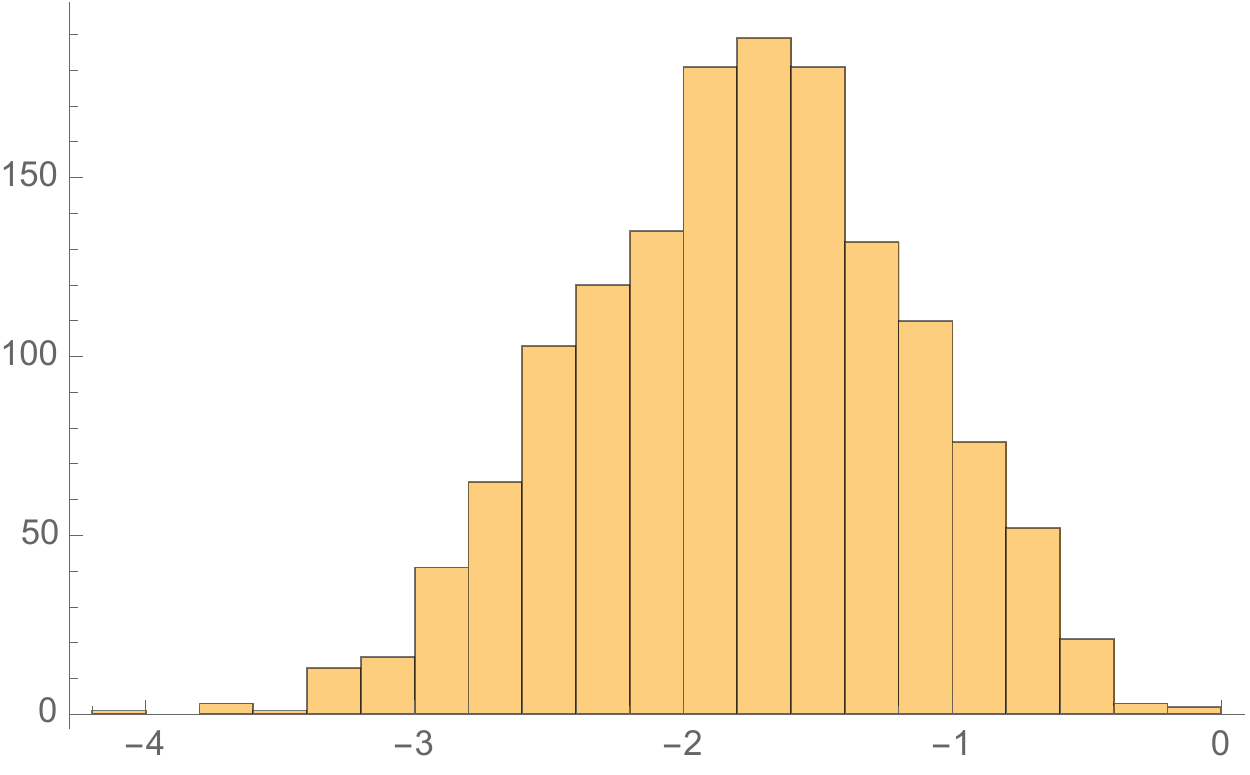}
\caption{\label{fig:histo7}Histogram of log $C(f)$}
\end{subfigure}
\hfill
\begin{subfigure}[b]{0.43\textwidth}
\includegraphics[width=\textwidth]{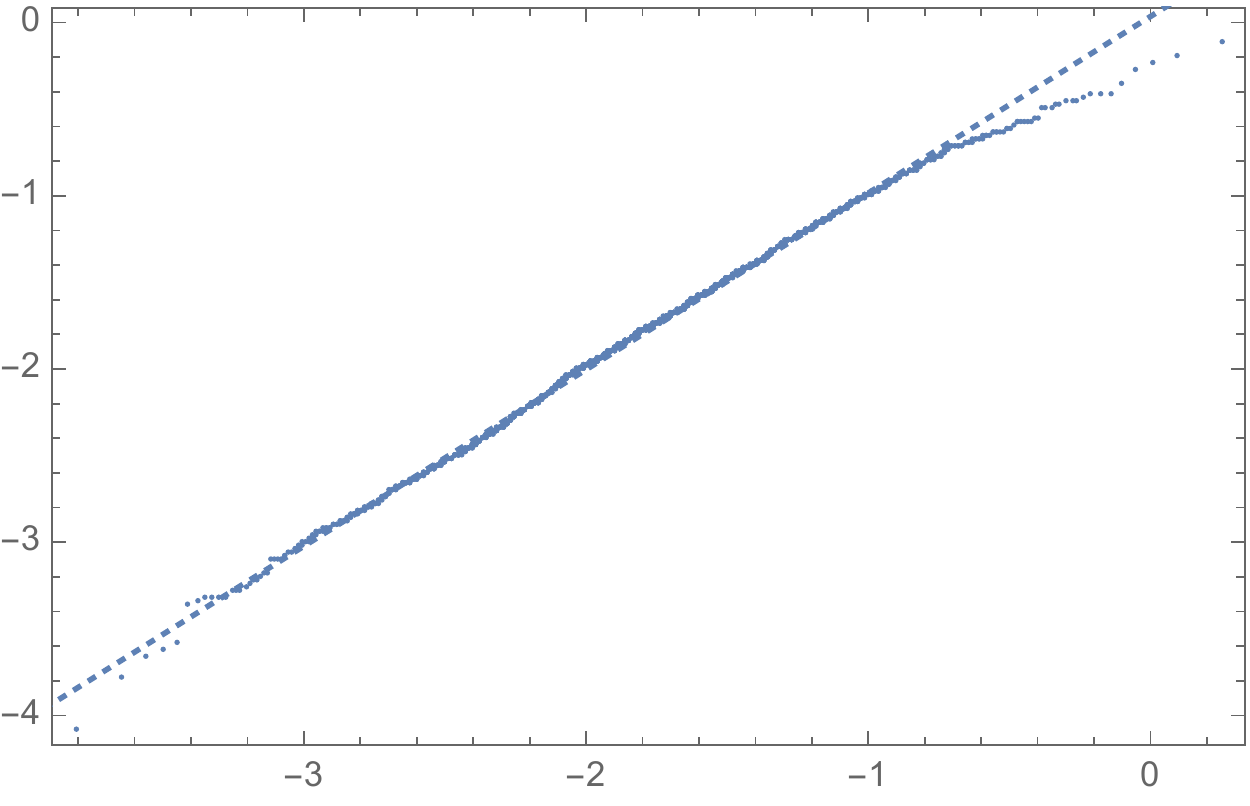}
\caption{\label{fig:qq7}quantile-quantile plot of log $C(f)$ }
\end{subfigure}
\caption{Degree 7}
\end{figure}

\begin{figure}[!tbp]
\begin{subfigure}[b]{0.43\textwidth}
\includegraphics[width=\textwidth]{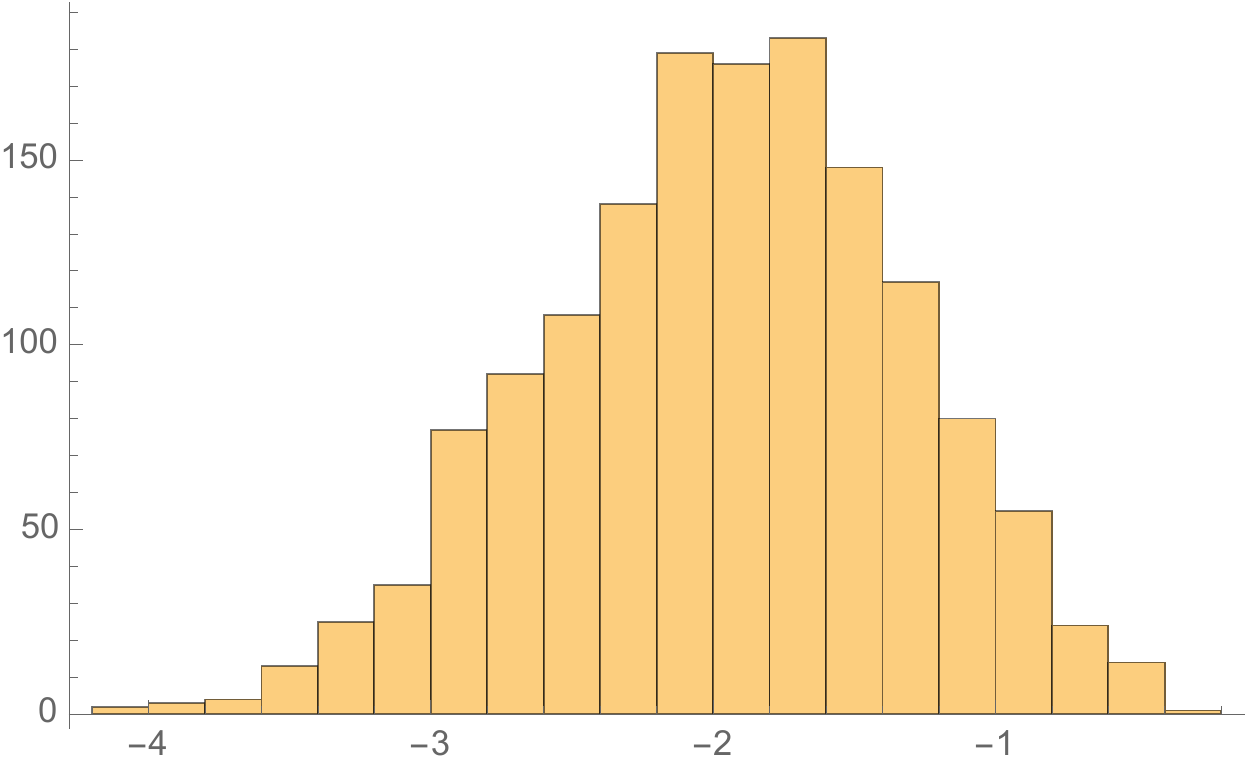}
\caption{\label{fig:histo8}Histogram of log $C(f)$}
\end{subfigure}
\hfill
\begin{subfigure}[b]{0.43\textwidth}
\includegraphics[width=\textwidth]{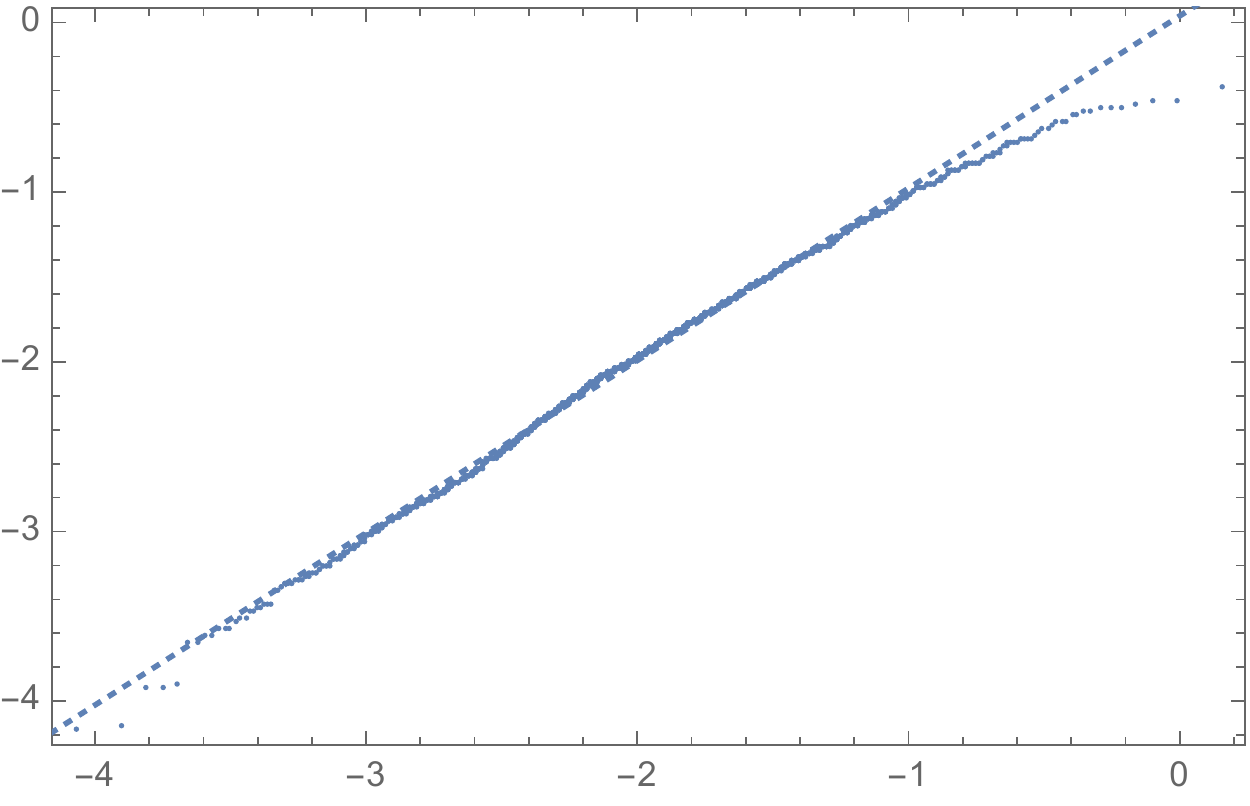}
\caption{\label{fig:qq8}quantile-quantile plot of log $C(f)$ }
\end{subfigure}
\caption{Degree 8}
\end{figure}

\section{How prime-rich can a polynomial be?}
As we have found (at least experimentally), the average value of $C(f)$ is $1,$ which means that a garden-variety monic (irreducible) polynomial of degree $d,$ when given an argument of size around $N,$ is about as likely to give a prime value as a number of size around $N^d.$ We say that a polynomial is \emph{prime-rich} if it is much more likely to give prime values (which means that $C(f) > 1$). Of course, since just over a quarter of all polynomials fail the Bunyakovsky condition (the probability that a polynomial with content $1$ in $\mathbb{Z}[x]$ gives even values always is around $1/4,$ odd values always is $1/27,$ and so on), the average Bunyakovsky polynomial is somewhat prime rich, but how well can we do? To find the answer, we computed $5000$ random polynomials of coefficients bounded above by $5000$ of each of the degrees $2, 3, 4, 5,$ and found the ones most prime-rich. Below, we give the three richest for each of the degrees we computed, but not before asking:
\begin{question}
\label{isbounded}
Is $C(f)$ bounded above (fixing degree)?
\end{question}
Of course, if the distribution of $C(f)$ is genuinely log-normal, then the answer is \textbf{NO.}

One place to look for an answer is in the proof that the product defining $C(f)$ actually converges. The proof (see 
\cite{bateman1962heuristic}) uses Landau's Prime Ideal Theorem to observe that 
\[
\sum_{p<x} \frac{n_p(f)}{p} = \log \log x + A_f + o(1),
\]
for \emph{any} irreducible $f$ (the constant $A_f$ for $f(x) = x$ is the so-called Mertens-Meissel constant $M$).
In turn, that implies that 
\[
\lim_{x\rightarrow \infty}\sum_{p<x} \frac{n_f(p) -1}p = A_f -M,
\]
which immediately implies that the product converges, and indicates that the size of $C(f)$ is controlled by the magic constant $A_f,$ which does not seem to help that much.

Note that if we had some uniform estimate on the convergence speed of the Euler product, then the answer to the Question \ref{isbounded} is clearly negative:
simply take a polynomial which has no roots for the first $k$ primes (this is easy to do by Chinese remaindering) - the product of the first $k$ terms of the defining product is going to be of order of $\log k$ (while the coefficients of the polynomial with given reductions mod $p$ will be of order of $e^k$) - a uniform estimate on the remainder would finish the job. However, we are not aware of any such uniformity result (notice that it would be enough to have uniformity \emph{on average}). If we remove the condition of being monic, then the obvious candidate (without Chinese remaindering) is the polynomial
\[
P_{n, k}(x) = 1+  x^k \prod_{i=1}^n p_i.
\]
A numerical experiment indicates that $C(P_{30, 2}) \approx 9.5.$
Modulo all of the above, one can ask a more detailed question:
\begin{question}
\label{morebound}
If we look at all $f \in \mathbb{Z}[x]$ of some fixed degree with coefficients bounded (in absolute value) by $N,$ is it true that the maximal $C(f)$ is of order $\log \log N?$
\end{question}
One can look further, and look at whether more sophisticated methods of computing $C(f)$ than just multiplying out the beginning of the Euler products give us any clue. Such methods are described in a number of very nice papers by Nobushige Kurokawa\footnote{The author would like to thank Keith Conrad for pointing these out to him.} -- \cite{kuro1,kuro2,kuro3,kuro4} - Kurokawa shows how to represent $C(f)$ as a product of Artin $L$-functions. Again, it is not clear how to leverage this to get a bound.

The other obvious question is:
\begin{question}
What do prime-rich polynomials have in common?
\end{question}
It is fairly clear (\emph{assuming the truth of the Bateman-Horn conjecture} ) that the constant term should be prime (or a product of large primes), and this is borne out by the results below.

Here are the winners (we give top three for each degree):

\begin{center}
{\tabulinesep=1.2mm
  \begin{tabu}{| c | r |}
    \hline
    $C(f)$ & $f(x)$ \\ \hline
   6.3722 & $x^2 - 2619 x + 1291$ \\ \hline
    6.36569 & $x^2 -2717 x - 1471$ \\ \hline
    6.23592 & $x^2 +2321 x + 911$ \\ \hline
   5.51225 & $x^3+3914 x^2-3485 x+2773$ \\ \hline
   5.37671 & $x^3-611 x^2-424 x-2999$ \\ \hline
   5.35378 & $x^3+707 x^2-800 x-509$ \\ \hline
   6.19895 & $x^4-1065 x^3+409 x^2-265 x+817$ \\ \hline
  5.74642 & $x^4+254 x^3-3125 x^2-1204 x-2609$ \\ \hline
   5.65711 & $x^4+4590 x^3-4932 x^2+2061 x-1289$ \\ \hline
  6.27693& $x^5-2127 x^4-1190 x^3-2317 x^2-1499 x-2257$ \\ \hline
     6.15153 & $x^5-4686 x^4+2812 x^3-4475 x^2-1714 x+3389$ \\ \hline
   5.74885 & $x^5-3738 x^4-150 x^3-4819 x^2-2748 x+307$ \\ \hline
    6.86439 & $x^6+2697 x^5+3377 x^4+2484 x^3-2700 x^2+1587 x+1831$ \\ \hline
   6.28748 & $x^6+4705 x^5+232 x^4-3661 x^3+3063 x^2-820 x-3533$ \\ \hline
    6.12438 & $x^6+1138 x^5-2757 x^4-4376 x^3+954 x^2+4060 x-2729$ \\ \hline
     5.72267 & $x^7+1965 x^6+2378 x^5-2384 x^4-1298 x^3-600 x^2-2610 x-1249$ \\ \hline
    5.62225 & $x^7-4618 x^6+4170 x^5+4299 x^4+1447 x^3+4695 x^2+2032 x-4387$ \\ \hline
    5.54845 & $x^7+704 x^6-2286 x^5-1938 x^4-462 x^3+2470 x^2+4241 x-2179$ \\ \hline
  \end{tabu}
  }
\end{center}
\bibliographystyle{plain}
\bibliography{bh}
\end{document}